\documentclass[11pt]{article}
\usepackage{amsmath,amssymb,amsthm}
\usepackage{geometry}
\usepackage{graphicx}
\usepackage[numbers,sort&compress]{natbib}

\theoremstyle{plain}
\newtheorem{theorem}{Theorem}
\newtheorem{lemma}{Lemma}
\newtheorem{definition}{Definition}
\theoremstyle{remark}
\newtheorem{remark}{Remark}

\newcommand{\fo}{f_1^*}   
\newcommand{\ft}{f_2^*}   
\newcommand{\fy}{f_y^*}   
\newcommand{\gx}{g_x^*}   
\newcommand{\gy}{g_y^*}   
\newcommand{\foo}{f_{11}^*}
\newcommand{\fot}{f_{12}^*}
\newcommand{\ftt}{f_{22}^*}
\newcommand{\foy}{f_{1y}^*}
\newcommand{\fty}{f_{2y}^*}
\newcommand{\fooo}{f_{111}^*}
\newcommand{\foot}{f_{112}^*}
\newcommand{\fott}{f_{122}^*}
\newcommand{\fttt}{f_{222}^*}

\title{Cusped singularities organize mixed-mode oscillations in
mutually inhibitory slow-fast systems}
\author{Morten Gram Pedersen\textsuperscript{a,b,*} \\
\small \textsuperscript{a}Department of Information Engineering, University of Padova, 35131 Padova, Italy \\
\small \textsuperscript{b}Padova Neuroscience Center, University of Padova, 35129 Padova, Italy \\
\small \textsuperscript{*}Corresponding author: mortengram.pedersen@unipd.it}
\date{}

\begin{document}
\maketitle

\begin{abstract}
Mutual inhibition is a common motif in neural systems. Here, we establish that cusped
singularities---folded singularities located at cusp points of critical manifolds---provide a
universal organizing mechanism for mixed-mode oscillations (MMOs) in coupled slow-fast
systems with mutual inhibition.
We show that the geometric setup of these systems generically satisfies the conditions
required by established geometric singular perturbation theory and blow-up methods,
guaranteeing that such cusped singularities yield small-amplitude oscillations (SAOs).
MMOs appear from the SAOs combined with an appropriate return mechanism.
Further, we show that the geometric presence of a cusped singularity is
strictly related to occurrence of a nearby singular Hopf bifurcation.
We demonstrate the efficacy of this framework in two distinct neuronal models: the Curtu
rate model of mutually inhibitory neural populations and coupled Morris-Lecar neurons
with synaptic inhibition.
In both cases, pushing the full system equilibrium near the cusped singularity triggers
SAOs as the system passes near the cusp and approaches a full-system saddle-focus related
to the singular Hopf bifurcation.
Large-amplitude oscillations appear as the system spirals away from the saddle-focus,
leading to MMOs, which may exhibit distinctive alternating patterns, in contrast to
standard saddle-node induced MMOs.
Our results establish cusped singularities as a generic, biologically relevant mechanism
for complex oscillatory dynamics in inhibitory neural networks as well as for other
inhibitory slow-fast systems.

\smallskip
\noindent\textbf{Keywords:} mixed-mode oscillations, cusped singularities, geometric
singular perturbation theory, singular Hopf bifurcation, neural inhibition, blow-up
method, slow-fast systems
\end{abstract}

\section{Introduction}
Mixed-mode oscillations (MMOs)---complex rhythms alternating between small-amplitude
oscillations (SAOs) and large-amplitude oscillations (LAOs)---are ubiquitous in neural
systems operating on multiple time scales \cite{desroches2012}.
Classical mechanisms for MMOs include folded nodes, canard solutions, and singular Hopf
bifurcations \cite{desroches2012, wechselberger2005, krupa2010, guckenheimer2008,
battaglin2021}.
Recently, Kristiansen and Pedersen \cite{kristiansen2023} identified a novel mechanism
to explain MMOs in coupled FitzHugh-Nagumo oscillators with repulsive coupling
\cite{pedersen2022}: the cusped singularity, a folded singularity located at a cusp point
of the critical manifold.

Here we develop a comprehensive theoretical framework showing that cusped singularities
provide a universal organizing mechanism for MMOs in mutually inhibitory neuronal
systems.
We consider a general class of slow-fast systems with symmetric coupling, and prove that
the presence of a cusped singularity implies the occurrence of a singular Hopf
bifurcation in the full system.
This result establishes a direct correspondence between the geometry of the critical
manifold (cusp point) and the dynamic bifurcation (singular Hopf), providing a powerful
tool for analyzing MMOs in diverse neural models.
Our framework is then applied to two biophysically distinct models of neuronal activity: 
(i) the Curtu rate model
\cite{curtu2010}, describing mutually inhibitory neural populations with sigmoidal
activation; and (ii) coupled Morris-Lecar neurons \cite{morris1981} with fast synaptic
inhibition.
Despite their different mathematical structures and biological
interpretations,
both models exhibit cusped singularities that organize MMOs.
This universality underscores the biological relevance of the mechanism across different
levels of neural modeling, but also points towards the wider applicability of the framework 
to other types of symmetric systems with inhibitory coupling.

The paper is organized as follows. Section~\ref{sec:framework} establishes the general
theoretical framework, deriving conditions for cusped singularities, performing the center
manifold reduction that underpins the SAO count, and proving their relation to singular
Hopf bifurcations.
Section~\ref{sec:curtu} applies the theory to the Curtu model,
Section~\ref{sec:ml} to the Morris-Lecar model, and Section~\ref{sec:discussion}
discusses implications for neural dynamics and beyond.

\section{General framework for cusped singularities}
\label{sec:framework}

Consider a slow-fast system with two identical mutually inhibitory units:
\begin{align}
\dot{x}_{1} &= f(x_{1},x_{2},y_{1}), \quad \dot{y}_{1} = \epsilon g(x_{1},y_{1}),
\nonumber \\
\dot{x}_{2} &= f(x_{2},x_{1},y_{2}), \quad \dot{y}_{2} = \epsilon g(x_{2},y_{2}),
\label{eq:full}
\end{align}
where $\epsilon\ll1$.
The fast dynamics depends on the local fast variable, the coupled fast variable, and the
local slow variable.
We assume inhibitory coupling, meaning $\partial_{x_{j}}f(x_{i},x_{j},y_{i})<0$
(increasing the coupled variable suppresses the local variable).
The system~\eqref{eq:full} is symmetric under the exchange
$\mathcal{T}:(x_{1},x_{2},y_{1},y_{2})\mapsto(x_{2},x_{1},y_{2},y_{1})$.
We will use the notation $(x_i,x_j,y_i,y_j)\in
\mathbb{R}^2_{\mathrm{fast}}\times \mathbb{R}^2_{\mathrm{slow}}$.

\subsection{Critical manifold geometry}
The critical manifold $\mathcal{C}$ is defined by
\begin{equation}
f(x_{i},x_{j},y_{i}) = 0, \quad i=1,2.
\label{eq:crit}
\end{equation}
Assuming $\partial_{y}f\ne0$ (solvability for the implicit function theorem), we solve
\eqref{eq:crit} for the slow variables $y_{i} = Y(x_{i},x_{j})$.
This parametrizes $\mathcal{C}$ as a graph
$(y_{1},y_{2}) = F(x_{1},x_{2}) = (Y(x_{1},x_{2}), Y(x_{2},x_{1}))$.

The Jacobian is given by
\begin{equation}
DF(x_{1},x_{2}) = \begin{pmatrix}
\partial_{x_{1}}Y(x_{1},x_{2}) & \partial_{x_{2}}Y(x_{1},x_{2}) \\
\partial_{x_{1}}Y(x_{2},x_{1}) & \partial_{x_{2}}Y(x_{2},x_{1})
\end{pmatrix} =
\begin{pmatrix} Y_1(x_{1},x_{2}) & Y_2(x_{1},x_{2}) \\
Y_2(x_{2},x_{1}) & Y_1(x_{2},x_{1}) \end{pmatrix}
\label{eq:DF}
\end{equation}
where $Y_k$ means differentiation with respect to the $k$th argument.
The projection $\pi:\mathcal{C}\rightarrow\mathbb{R}^2_{\mathrm{slow}}$ is singular where
$\det(DF)=0$, which defines a \emph{fold} on $\mathcal{C}$.
By symmetry $\mathcal{T}$ and implicit differentiation of \eqref{eq:crit}, at symmetric
points $p^{*}=(x^{*},x^{*},y^{*},y^{*})$ with $y^{*}=Y(x^{*},x^{*})$ we have
\begin{equation}
DF|_{p^{*}} = -\frac{1}{\fy}
\begin{pmatrix} \fo & \ft \\ \ft & \fo \end{pmatrix}
\label{eq:DF_eval}
\end{equation}
where $\fo := \partial_{x_{1}}f(x^{*},x^{*},y^{*})$,
$\ft := \partial_{x_{2}}f(x^{*},x^{*},y^{*})$, and
$\fy := \partial_{y}f(x^{*},x^{*},y^{*})$.

The eigenvalues of $DF|_{p^*}$ are $-(\fo\pm\ft)/\fy$.
The fold condition $\det(DF)=0$ yields two possibilities.
Here, we rigorously consider the fold condition 
\begin{equation}
\fo = \ft.
\label{eq:fold_new}
\end{equation}
At this symmetric fold point, the kernel of $DF$ is spanned by the antisymmetric vector
$(1, -1)^T$.
Note that by the assumption of inhibitory coupling $\partial_{x_2}f <0$, so that also $\partial_{x_1}f<0 $, and hence also $Y_1\neq 0$,
along the  fold in a neighborhood of $p^*$.
At symmetric points on the fold, the gradient of the determinant is
$\nabla \det DF(x^*,x^*) = Y_1(x^*,x^*)[Y_{11}(x^*,x^*)-Y_{22}(x^*,x^*)] (1,1)^T$,
which is perpendicular to $\ker(DF)$.
Hence, the fold curve is tangent to the antisymmetric subspace.
For the fold curve to be regular, the gradient must be non-vanishing, which yields the
non-degeneracy constraint
\begin{equation}
Y_{11}(x^{*},x^{*}) - Y_{22}(x^{*},x^{*}) \ne 0. \label{eq:cusp_new}
\end{equation}
Using implicit differentiation, the equivalent constraint expressed in partial derivatives
of the fast dynamics function $f$ is
\begin{equation}
D^*=\foo - \ftt
- 2\left( \frac{\fo}{\fy} \right)
\left( \foy - \fty \right) \ne 0,
\label{eq:cusp_f}
\end{equation}
where we use -- here and in the following -- the compact notation $\foo = \partial^2_{x_1x_1}f(x^*,x^*,y^*)$, $\foy = \partial^2_{x_1y}f(x^*,x^*,y^*)$, etc.

\begin{definition}[Non-degenerate cusped singularity]
\label{def:cusp}
A symmetric point $p^{*}=(x^{*},x^{*},y^{*},y^{*})\in\mathcal{C}$ is a
\emph{non-degenerate cusped singularity} if it satisfies the fold condition
\eqref{eq:fold_new} and the non-degeneracy constraint \eqref{eq:cusp_new}
(or equivalently \eqref{eq:cusp_f}).
\end{definition}

\subsection{From definition to cusp normal form}
We demonstrate how Definition~\ref{def:cusp} implies the standard cusp curve
$z^{2} \propto w^{3}$ for the critical manifold projection along the symmetric subspace.
Near the singularity $p^{*}$, we define symmetry-adapted coordinates
\begin{align}
v &= \tfrac{1}{2}(x_{1}+x_{2})-x^{*}, \quad u = \tfrac{1}{2}(x_{1}-x_{2}),
\label{eq:coord_vu} \\
w &= \tfrac{1}{2}(y_1+y_2) - y^{*}, \quad z = \tfrac{1}{2}(y_1-y_2).
\label{eq:coord_wz}
\end{align}
In these coordinates, the critical manifold $\mathcal{C}$ is given by a graph
$(w,z)=({W}(v,u), {Z}(v,u))$ where
${W}(0,0)={Z}(0,0)=0$.
Crucially, by construction, ${W}(v,u)$ is an even function of $u$, while
${Z}(v,u)$ is an odd function of $u$.
The odd symmetry enforces ${Z}(v,0) = 0$ and forces all even partial derivatives
of ${Z}$ with respect to $u$ to vanish at $u=0$.
In addition, all pure $v$-derivatives of ${Z}$ vanish identically at the origin.
Differentiating ${Z}(v,u)$ with respect to $u$ and evaluating at the origin yields
\begin{equation}
\partial_{u}{Z}(0,0) = \partial_{x_{1}}Y(x^{*},x^{*}) -
\partial_{x_{2}}Y(x^{*},x^{*}) = -\frac{1}{\fy}(\fo - \ft).
\end{equation}
Our fold condition \eqref{eq:fold_new}, which states $\fo = \ft$, is
therefore strictly equivalent to $\partial_{u}{Z}(0,0) = 0$.
This shows the projection is degenerate along the $u$-direction.

Because $\partial_{u}^{2}{Z}(0,0) = 0$ trivially by the odd symmetry, the
geometry of the fold curve is governed by the mixed partial derivative.
Differentiating $\partial_u {Z}$ with respect to $v$ yields
\begin{equation}
\partial_{v}\partial_{u}{Z}(0,0) = Y_{11}(x^{*},x^{*}) - Y_{22}(x^{*},x^{*}).
\end{equation}
Thus, our derived non-degeneracy condition \eqref{eq:cusp_new} guarantees that
$\partial_{v}\partial_{u}{Z}(0,0) \ne 0$.
Applying the fold and non-degeneracy conditions to the Taylor expansion of
${Z}(v,u)$, and utilizing its odd symmetry, all linear and pure even-order terms
vanish, leaving
\begin{equation}
{Z}(v,u) = B v u + A u^{3} + \mathcal{O}(u^{5}, v^{2}u),
\label{eq:Zexp}
\end{equation}
where $A = \frac{1}{6}\partial_{u}^{3}{Z}(0,0)$ and
$B = \partial_{v}\partial_{u}{Z}(0,0) \ne 0$.
We assume the generic transversality condition $A \ne 0$ to ensure the singularity forms
a standard cusp rather than a higher-order flat manifold.
The fold curve $\mathcal{F}$ satisfies $\partial_u {Z}(v,u) = 0$, giving
$v = -\frac{3A}{B}u^2 + \mathcal{O}(u^4)$, i.e., $z \approx -2A u^3$.
Simultaneously, because ${W}(v,u)$ is an even function of $u$, its expansion
gives $w \approx Cv + Du^2$.
Substituting $v \propto u^2$, we find $w \propto u^2$.
Eliminating the local variable $u$ yields the classic cusp geometry
\begin{equation}
z^{2} \propto w^{3}.
\label{eq:cusp_geom}
\end{equation}

\subsection{Center manifold reduction and the reduced slow-fast system}
\label{sec:cm}

We now derive the center manifold reduction that underpins the SAO analysis. This
section makes explicit the conditions on $f$ and $g$ required for the blow-up results
of \cite{kristiansen2023} to apply to the general system \eqref{eq:full}, and provides
the key structural lemma.

We work in the symmetry-adapted coordinates \eqref{eq:coord_vu}--\eqref{eq:coord_wz}
and augment \eqref{eq:full} by $\dot\epsilon=0$ to treat the extended system.
Let
\begin{align}
\Omega &:= - \frac{\fy}{2\fo} D^* = (\foy-\fty) - (\foo-\ftt)\frac{\fy}{2\fo},
\label{eq:Dy} \\
\Gamma &:= \frac{\fooo-3\foot+3\fott-\fttt}{6}
- \frac{(\foo-\ftt)(\foo-2\fot+\ftt)}{4\fo},
\label{eq:Acal}
\end{align}
and note that the non-degeneracy condition \eqref{eq:cusp_f} gives $\Omega\neq 0 $.

\begin{lemma}[Center manifold reduction]
\label{lem:cm}
Assume that $p^*$ is a non-degenerate cusped singularity (Definition~\ref{def:cusp}) and that $\Gamma \ne 0$.
Then the extended system \eqref{eq:full} augmented by $\dot\epsilon=0$, written in
coordinates $(v,u,w,z,\epsilon)$ centered at $p^*$, possesses a smooth attracting
four-dimensional center manifold
\begin{equation}
M_a:\quad v = h(u,w,z,\epsilon),
\label{eq:cm}
\end{equation}
which is even in $(u,z)$ and satisfies
\begin{equation}
h(u,w,z,\epsilon) = -\frac{\fy}{2\fo}\,w
- \frac{\foo-2\fot+\ftt}{4\fo}\,u^2
+ \mathcal{O}(\epsilon,\,uz,\,u^4,\,w^2,\,z^2).
\label{eq:h_expansion}
\end{equation}
The reduced dynamics on $M_a$ takes the form
\begin{equation}
\begin{aligned}
\dot{u} &= \fy z + \Omega\, uw + \Gamma\, u^3 + \mathcal{O}(4), \\
\dot{w} &= \epsilon\bigl(g_0^*+ \nu_{\mathrm{eff}}\, w  
+ \rho_{\mathrm{eff}}\, u^2 + \mathcal{O}(w^2, uz, u^4, z^2)\bigr), \\
\dot{z} &= \epsilon\bigl(\gx u + \gy z + \mathcal{O}(u^3, uw, wz)\bigr),
\end{aligned}
\label{eq:reduced_3d}
\end{equation}
where
\begin{equation}
g_0^* := g(x^*,y^*), \qquad \nu_{\mathrm{eff}} := \gy - \frac{\gx\fy}{2\fo},
\qquad
\rho_{\mathrm{eff}} := -\frac{\gx(\foo-2\fot+\ftt)}{4\fo} +\frac12 g_{xx}^*.
\label{eq:eff_params}
\end{equation}
The system \eqref{eq:reduced_3d} is slow-fast with one fast variable $u$ and two slow
variables $(w,z)$.
Its critical manifold $\mathcal{S}$, defined by $\dot u = 0$, is given by
$z = Q(u,w)$ where $Q$ is odd in $u$ and satisfies
\begin{equation}
Q(u,w) = -\frac{\Omega}{\fy}\,uw - \frac{\Gamma}{\fy}\,u^3
+ \mathcal{O}(u^5,\,uw^2,\,u^3 w).
\label{eq:Q}
\end{equation}
The point $(u,w,z)=(0,0,0)$ is a cusp singularity of $\mathcal{S}$ in the sense of
\cite{kristiansen2023}.
\end{lemma}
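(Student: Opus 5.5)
The plan is to write \eqref{eq:full} in the coordinates \eqref{eq:coord_vu}--\eqref{eq:coord_wz} and Taylor expand around $p^*$. With $x_1=x^*+v+u$, $x_2=x^*+v-u$, $y_{1,2}=y^*+w\pm z$, the fast equations become
\[
\dot v=\tfrac12\bigl[f(x_1,x_2,y_1)+f(x_2,x_1,y_2)\bigr],\qquad
\dot u=\tfrac12\bigl[f(x_1,x_2,y_1)-f(x_2,x_1,y_2)\bigr].
\]
By the symmetry $\mathcal{T}$, which acts as $(v,u,w,z)\mapsto(v,-u,w,-z)$, the $\dot v$ equation is even in $(u,z)$ and the $\dot u$ equation is odd. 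The same holds for the slow equations $\dot w$ and $\dot z$.

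\emph{Linear part.} The linear part of $\dot v$ is $(\fo+\ft)v+\fy w=2\fo v+\fy w$, using the fold condition \eqref{eq:fold_new}. The linear part of $\dot u$ is $(\fo-\ft)u+\fy z=\fy z$. At $\epsilon=0$ the linearization of the extended system therefore has a single nonzero eigenvalue $2\fo<0$; it is negative by the inhibition assumption $\fo=\ft<0$. All other eigenvalues are zero. Standard center manifold theory then gives a smooth, locally attracting, four-dimensional center manifold $v=h(u,w,z,\epsilon)$. Because the vector field is $\mathcal{T}$-equivariant and $v$ is $\mathcal{T}$-invariant, $h$ can be chosen even in $(u,z)$: take the symmetrized graph, or use uniqueness of the Taylor jet together with equivariance of the construction.

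\emph{Quadratic terms of $h$.} To find them, I will solve the invariance equation at $\epsilon=0$. There $w$ and $z$ are constant, and $\dot u=\mathcal{O}(2)$. Hence the leading order of $\dot v=0$ on the manifold gives
\[
2\fo h+\fy w+\tfrac12\cdot\tfrac12(\foo-2\fot+\ftt)\cdot 2u^2+\dots=0,
\]
and I will collect the $u^2$ coefficient from the second-order Taylor expansion of the even part. The $\epsilon$-terms, the $w^2$, $z^2$ and $uz$ terms, and the correction coming from $\dot h\neq0$ are all absorbed into the stated error term. This yields \eqref{eq:h_expansion}.

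\emph{Reduced equations.} Next I substitute $v=h$ into $\dot u$ and expand to third order.
\begin{itemize}
\item The quadratic part of $\dot u$ consists of its odd terms: $(\foo-\ftt)uv$, $(\foy-\fty)uw$, and terms in $vz$ and $wz$. Substituting $v=-\frac{\fy}{2\fo}w-\frac{\foo-2\fot+\ftt}{4\fo}u^2$ into $uv$ produces the coefficient $\Omega$ of $uw$. The same substitution produces the second piece of $\Gamma$.
\item The cubic $u^3$ term of $\dot u$ is $\tfrac16(\fooo-3\foot+3\fott-\fttt)$, read off from the odd third-order derivatives along $(1,-1)$.
\end{itemize}
Together these give $\Gamma$. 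The slow equations follow by expanding $g(x^*+v\pm u,\,y^*+w\pm z)$ and substituting $h$:
\begin{itemize}
\item For $\dot w=\tfrac{\epsilon}{2}(g_1+g_2)$, the linear term is $\gx v+\gy w$, which gives $\nu_{\mathrm{eff}}$. The $u^2$ terms come from $\tfrac12 g_{xx}^*u^2$ plus $\gx$ times the $u^2$ part of $h$, which gives $\rho_{\mathrm{eff}}$.
\item For $\dot z=\tfrac{\epsilon}{2}(g_1-g_2)$, the result is $\epsilon(\gx u+\gy z+\dots)$.
\end{itemize}

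\emph{Critical manifold and cusp.} The reduced system has one fast variable $u$ and two slow variables $(w,z)$. Since $\fy\neq0$, the implicit function theorem solves $\dot u=0$ as $z=Q(u,w)$. Oddness of $\dot u$ under $(u,z)\mapsto(-u,-z)$ gives oddness of $Q$, and the expansion \eqref{eq:Q} follows directly. Finally, $\partial_uQ(0,0)=0$, $\partial_u\partial_wQ(0,0)=-\Omega/\fy\neq0$, and $\partial_u^3Q(0,0)=-6\Gamma/\fy\neq0$. These are exactly the fold, transversality and cubic nondegeneracy conditions defining a cusp singularity in \cite{kristiansen2023}, mirroring \eqref{eq:Zexp}.

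The main obstacle is bookkeeping. I have to verify that no other terms contribute at orders $uw$ and $u^3$: for instance $z$-dependent pieces of $h$ entering $uv$ are $\mathcal{O}(u^2z)$, which is fourth order under the weighting $u\sim\epsilon^{1/4}$, $w\sim\epsilon^{1/2}$, $z\sim\epsilon^{3/4}$. The $\mathcal{O}(\epsilon)$ corrections of $h$ also need checking. I would therefore fix this weighted order explicitly from the start and count $\mathcal{O}(4)$ with respect to it.
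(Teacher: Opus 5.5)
Your proposal is correct and takes essentially the same route as the paper. The shared steps are: the parity-split Taylor expansion in $(v,u,w,z)$; the single nonzero eigenvalue $2\fo<0$ giving the attracting center manifold; the invariance equation at $\epsilon=0$ for the $w$- and $u^2$-coefficients of $h$; substitution of $h$ into $(\foo-\ftt)vu$ to produce $\Omega$ and the second part of $\Gamma$; and the implicit function theorem for $Q$. Your check of $\partial_u\partial_wQ(0,0)\neq0$ and $\partial_u^3Q(0,0)\neq0$ replaces the paper's explicit fold curve $w=-(3\Gamma/\Omega)u^2$ and $z^2\propto w^3$.

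Two small slips need fixing:
\begin{itemize}
\item $\dot u$ is not $\mathcal{O}(2)$, since it contains $\fy z$. What you actually need, as the paper uses, is that $\dot u$ has no linear $u$-term. Then $(\partial_u h)\,\dot u$ contributes only $uz$, $z^2$ and higher-order terms, so the $u^2$-coefficient of $h$ is unaffected.
\item Averaging $h$ with its reflection does not yield an invariant graph, because the vector field is nonlinear in $v$. Evenness of $h$ should instead come from a $\mathcal{T}$-equivariant (symmetric-cutoff) center manifold construction, or be claimed only for its Taylor jet.
\end{itemize}
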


\begin{proof}
\textbf{Step 1: Taylor expansion.}
Define $F^+(v,u,w,z):=f(x^*+v+u,\,x^*+v-u,\,y^*+w+z)$ and
$F^- := F^+(v,-u,w,-z)$ (by $\mathcal{T}$-symmetry of $f$).
Then $\dot v = (F^++F^-)/2$ is even in $(u,z)$ and
$\dot u = (F^+-F^-)/2$ is odd in $(u,z)$.
A direct Taylor expansion gives:
\begin{equation}
\dot v = 2\fo v + \fy w
+ \tfrac{\foo-2\fot+\ftt}{2}\,u^2
+ (\foy+\fty)\,vw + \mathcal{O}(v^2, w^2, z^2, uz, u^4).
\label{eq:vdot}
\end{equation}
For the $u$-equation, by symmetry only 
odd-in-$(u,z)$ terms are non-zero, and we get:
\begin{equation}
\begin{split}
\dot u = \fy z + (\foo-\ftt)\,vu + (\foy-\fty)\,uw
&+ \tfrac{\fooo-3\foot+3\fott-\fttt}{6}\,u^3 \\ 
&+ \mathcal{O}(u^5, v^2u, w^2u, vwu, vz, wz, u^2z, z^3).
\label{eq:udot}
\end{split}
\end{equation}
The slow equations are given by the Taylor expansion of $g(x^*+v\pm u, y^*+w\pm z)$:
\begin{equation}
\begin{aligned}
\dot w &= \epsilon\left(g_0^* + \gx v + \gy w + \frac{g_{xx}^*}{2}(v^2 + u^2) + g_{xy}^*(vw + uz) + \frac{g_{yy}^*}{2}(w^2 + z^2) + \mathcal{O}(v^3, u^2v, u^4)\right), \\
\dot z &= \epsilon\left(\gx u + \gy z + g_{xx}^*vu + g_{xy}^*(vz+uw) + g_{yy}^*wz + \mathcal{O}(u^3, uv^2)\right).
\end{aligned}
\label{eq:wzdot}
\end{equation}

\smallskip
\textbf{Step 2: Center manifold existence.}
At $p^*$ with $\epsilon=0$, the linearization of the extended system has eigenvalue
$2\fo<0$ in the $v$-direction (by $\fo<0$ from the fold condition and
inhibitory coupling) and eigenvalue $0$ in the $u$, $w$, $z$, $\epsilon$ directions.
By the center manifold theorem \cite{carr1981}, the extended system possesses a smooth
four-dimensional center manifold $M_a: v=h(u,w,z,\epsilon)$.
The $\mathcal{T}$-symmetry of \eqref{eq:full} forces $h$ to be even in $(u,z)$.

\smallskip
\textbf{Step 3: Expansion of $h$.}
The invariance equation $\dot v|_{v=h}
= (\partial_u h)\dot u + (\partial_w h)\dot w + (\partial_z h)\dot z$
at $\epsilon=0$ evaluated on the center manifold $v = h_{0w}w + h_{uu}u^2 + \mathcal{O}(\epsilon, uz, u^4, w^2, z^2)$ gives:
At order~$w$ we have $2\fo\,h_{0w} + \fy = 0$, i.e.,
\[ 
h_{0w} = -\frac{\fy}{2\fo}.
\]
At order $u^2$, since $\partial_u h = 2h_{uu}u + \mathcal{O}(z, u^3, uw)$ and
$\dot u = \fy z + \mathcal{O}(u^3, uw, vu)$, the right-hand side is 
$(\partial_u h)\dot u = 2h_{uu}\fy uz + \mathcal{O}(u^4, u^2w, z^2)$, which contributes only at
weight $\geq 3$ in the blowup sense. Equating the $u^2$ terms of the left-hand side to zero gives
\[
h_{uu} = -\frac{\foo-2\fot+\ftt}{4\fo},
\]
establishing \eqref{eq:h_expansion}.

\smallskip
\textbf{Step 4: Reduced system on $M_a$.}
Substituting $v=h$ into \eqref{eq:udot} and using
$h = -(\fy/2\fo)w + h_{uu}\,u^2 + \mathcal{O}(uz, u^4, w^2, z^2)$:
\begin{equation*}
\begin{split}
\dot u &= \fy z
+ (\foo-\ftt)\!\left(-\frac{\fy}{2\fo}w + h_{uu}\,u^2\right)\!u
+ (\foy-\fty)\,uw
+ \tfrac{\fooo-3\foot+3\fott-\fttt}{6}\,u^3\\
&\hspace{10.5cm}+ \mathcal{O}(u^2 z, u^5, uw^2, uz^2, wz, z^3) \\
&= \fy z
+ \!\underbrace{\left[(\foy-\fty)
- (\foo-\ftt)\frac{\fy}{2\fo}\right]}_{\Omega}\!uw \\
&\qquad\qquad\qquad
+ \!\underbrace{\left[\frac{\fooo-3\foot+3\fott-\fttt}{6}
- \frac{(\foo-\ftt)(\foo-2\fot+\ftt)}{4\fo}\right]}_{\Gamma}\!u^3 +  \mathcal{O}(4). 
\end{split}
\end{equation*}
Substituting $v=h$ into the $\dot w$ equation in \eqref{eq:wzdot}:
\begin{align*}
\dot w &= \epsilon\!\left[g_0^* + \gx \!\left(-\frac{\fy}{2\fo}w + h_{uu}\,u^2\right)\! + \gy w + \frac{1}{2}g_{xx}^* u^2 + \mathcal{O}(w^2, uz, u^4, z^2) \right] \\
&= \epsilon\!\left[g_0^* + \!\left(\gy-\frac{\gx\fy}{2\fo}\right)\!w
+ \!\left( \gx h_{uu} + \frac{1}{2}g_{xx}^* \right)\! u^2 + \mathcal{O}(w^2, uz, u^4, z^2) \right] \\
&= \epsilon\bigl(g_0^* +\nu_{\mathrm{eff}}\,w + \rho_{\mathrm{eff}}\,u^2 + \mathcal{O}(w^2, uz, u^4, z^2)\bigr).
\end{align*}
This gives the reduced system \eqref{eq:reduced_3d}.

\smallskip
\textbf{Step 5: Cusp structure of $\mathcal{S}$.}
Setting $\dot u=0$ in \eqref{eq:reduced_3d} gives $z=Q(u,w)$ as in \eqref{eq:Q}.
By $\mathcal{T}$-symmetry, $Q$ is odd in $u$.
The fold set $\partial_u Q = 0$ yields
$w = -(3\Gamma/\Omega)\,u^2 + \mathcal{O}(u^4)$,
which is a smooth curve since $\Omega\ne0$.
Substituting back gives $z\approx 2(\Gamma/\fy)u^3$  
so $z^2 \approx \frac{- 4 \Omega^3}{27 (\fy)^2 \Gamma} w^3$, i.e, 
$z^2\propto w^3$: the cusp normal form, matching \cite[Lemma~3]{kristiansen2023}.

\end{proof}

\begin{remark}\label{rem1}
If $\Gamma/\Omega > 0$ the cusp opens up in the $w<0$ direction. 
Further, if $\Omega/\fy<0$ then the central sheet (present for $w<0$) of $\mathcal{S}$ is attracting.
These conditions are met if and only if $\Gamma$ and $\Omega$ have the same sign, which is different from the sign of $\fy$ .
For the FHN model of \cite{kristiansen2023}, one verifies that
$\fy=-1$, $\Omega = -3v_s/g > 0$ (since $v_s>0$, $g<0$) and
$\Gamma = -(9v_s^2+g)/g = -(9-5g)/g>0$.
Hence, $\Gamma/\Omega > 0$ and $\Omega/\fy<0$, confirming the cusp geometry outlined above, see Figure 11 in \cite{kristiansen2023}.
\end{remark}

\begin{remark}\label{rem2}
If $\Gamma/\Omega < 0$ the cusp opens up in the $w>0$ direction and the central sheet is attracting if $\Omega/\fy>0$. 
These conditions are equivalent to $\Gamma$ and $\fy$ having opposite signs and $\Omega$ and $\fy$ having the same sign.
\end{remark}

\subsection{Singular Hopf bifurcation and the cusped saddle-node}
\label{sec:hopf}

The full system \eqref{eq:full} has a symmetric equilibrium $p^{*}\in \mathcal{C}$ when
$g(x^{*},y^{*})=0$.
Using the symmetry eigenbasis, the Jacobian  $J$ at $p^*$ block-diagonalizes into symmetric ($J_s$) and
antisymmetric ($J_a$) components
\begin{equation}
J_{s} = \begin{pmatrix} \fo+\ft & \fy \\
\epsilon \gx & \epsilon \gy \end{pmatrix}, \quad
J_{a} = \begin{pmatrix} \fo-\ft & \fy \\
\epsilon \gx & \epsilon \gy \end{pmatrix}.
\label{eq:blocks}
\end{equation}

\begin{theorem}[Cusped singularity, singular Hopf, and SAOs]
\label{thm:main}
Consider system \eqref{eq:full} with a symmetric non-degenerate cusped singularity $p^{*}=(x^{*},x^{*},y^{*},y^{*})$.
Assume:
\begin{itemize}
\item[(C1)] inhibitory coupling: $\ft<0$;
\item[(C2)] slow dynamics decay: $\gy \leq0$;
\item[(C3)] cross-derivative product: $\fy \gx < 0$;
\item[(C4)] cusp non-degeneracy: $\Gamma\neq 0$;
\item[(C5)] the cusp is not an equilibrium: $g_0^*= g(x^*,y^*)\neq 0$;
\item[(C6)] the flow crosses the cusp from the attracting sheet $\mathcal{S}_a$: 
$g_0^*\cdot  \Omega>0$ and $\Gamma >0$.
\end{itemize}
Then:
\begin{itemize}
    \item[(i)] 
    For $0<\epsilon\ll1$ the full system undergoes a singular Hopf bifurcation at a nearby
    parameter value satisfying $\fo - \ft = -\epsilon \gy$.
    The bifurcation is governed by the antisymmetric block $J_a$, with purely imaginary
    eigenvalues $\pm i\sqrt{\epsilon|\fy\gx|}+\mathcal{O}(\epsilon)$.
    \item[(ii)] For $0<\epsilon\ll1$, trajectories of \eqref{eq:full} that pass through a
    neighborhood of $p^*$ undergo SAOs. More precisely, the center manifold reduction of
    Lemma~\ref{lem:cm} yields a three-dimensional slow-fast system \eqref{eq:reduced_3d}
    whose critical manifold has a cusp singularity at $(u,w,z)=(0,0,0)$.
    Applying  
    the analysis of \cite{kristiansen2023}, the
    number of SAOs is
    \begin{equation}
    n_{\mathrm{SAO}} = \left\lfloor \frac{\lambda_2}{\lambda_1} \right\rfloor,
    \label{eq:SAO_count}
    \end{equation}
    provided $\lambda_2/\lambda_1 \notin \mathbb{N}$,
    where $\lambda_1 < \lambda_2 < 0$ are the eigenvalues of the desingularized reduced
    problem on $\mathcal{S}_a$ at the cusped singularity.
\end{itemize}
\end{theorem}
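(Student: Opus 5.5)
For part (i) I will work with the block decomposition \eqref{eq:blocks}. At a symmetric equilibrium (so $g_0^*=0$ after moving a parameter, which is possible because the cusped singularity is not itself an equilibrium by (C5) but can be approached by unfolding), the spectrum of $J$ is the union of the spectra of $J_s$ and $J_a$.

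For $J_s$, inhibitory coupling (C1) together with the fold condition \eqref{eq:fold_new} gives $\fo+\ft=2\fo<0$. Hence $J_s$ has one fast eigenvalue near $2\fo$ and one slow eigenvalue of order $\epsilon$, with no imaginary crossing near $p^*$. Its determinant is $\epsilon(2\fo\gy-\fy\gx)$, which is positive by (C2)--(C3).

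For $J_a$, I compute $\mathrm{tr}\,J_a=(\fo-\ft)+\epsilon\gy$ and $\det J_a=\epsilon\bigl((\fo-\ft)\gy-\fy\gx\bigr)$. The trace vanishes exactly when $\fo-\ft=-\epsilon\gy$, which is an $\mathcal{O}(\epsilon)$ perturbation of the fold condition. There the determinant equals $\epsilon|\fy\gx|+\mathcal{O}(\epsilon^2)>0$ by (C3). This yields eigenvalues $\pm i\sqrt{\epsilon|\fy\gx|}+\mathcal{O}(\epsilon)$. Because the frequency is $\mathcal{O}(\sqrt\epsilon)$ and the crossing occurs at an $\mathcal{O}(\epsilon)$ distance from the singular fold, this is a singular Hopf bifurcation in the sense of Guckenheimer/Krupa--Wechselberger. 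For transversality, I will use the parameter that moves $\fo-\ft$ through zero: the derivative of the real part is $1/2$ with respect to $\fo-\ft$, hence nonzero. The critical manifold geometry near this point is exactly the cusped geometry of Lemma~\ref{lem:cm}, so the Hopf point sits at the cusp.

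For part (ii) I will invoke Lemma~\ref{lem:cm}, whose hypotheses hold by Definition~\ref{def:cusp} and (C4). It gives the reduced system \eqref{eq:reduced_3d} with critical manifold $z=Q(u,w)$ and a cusp at the origin. I will then check the hypotheses of \cite{kristiansen2023} one by one.

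\begin{itemize}
\item \textbf{Orientation.} I will show the cusp opens towards the side the slow flow enters from and that the central sheet is attracting. By Remarks~\ref{rem1}--\ref{rem2} this needs sign relations among $\Gamma$, $\Omega$ and $\fy$. Condition (C6) with $\Gamma>0$ and $g_0^*\Omega>0$ gives this orientation, since $\dot w\approx\epsilon g_0^*$ has the sign of $\Omega$ and thus pushes $w$ towards the cusp along $\mathcal{S}_a$.
\item \textbf{Desingularized reduced problem.} I will project the slow flow onto $\mathcal{S}$ using the coordinates $(u,w)$, so that $\dot z=\partial_uQ\,\dot u+\partial_wQ\,\dot w$. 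Rescaling time by $\partial_uQ$ gives the desingularized system. Its linearization at the origin has leading entries built from $-\Omega g_0^*/\fy$, $\gx$ and $\gy$. By (C3) and (C6) both eigenvalues are real and negative, $\lambda_1<\lambda_2<0$, which makes the origin a folded-node-type cusped singularity.
\item \textbf{Applying the SAO count.} The non-resonance assumption $\lambda_2/\lambda_1\notin\mathbb N$ is exactly the hypothesis under which \cite{kristiansen2023} proves, via blow-up, that there are $\lfloor\lambda_2/\lambda_1\rfloor$ rotations. Since the center manifold $M_a$ is attracting and smooth, these rotations lift to trajectories of \eqref{eq:full}.
\end{itemize}

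The main obstacle is matching \eqref{eq:reduced_3d} to the normal form of \cite{kristiansen2023}. This requires a scaling of $(u,w,z,t)$ that takes $\fy$, $\Omega$, $\Gamma$ and $g_0^*$ to unit magnitudes, together with verification that the higher-order terms have the correct blow-up weights: weights $u\sim r$, $w\sim r^2$, $z\sim r^3$, $\epsilon\sim r^4$. A particular point is the term $\rho_{\mathrm{eff}}u^2$ in $\dot w$, which is higher order in these weights. I would first write out this scaling explicitly and confirm that the remainders in \eqref{eq:reduced_3d} are of higher weight. After that, the eigenvalue signs reduce to sign bookkeeping using (C2), (C3) and (C6).
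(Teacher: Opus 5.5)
Your part (i) is the paper's argument: the trace and determinant of $J_s$ and $J_a$, with $\mathrm{tr}\,J_a=0$ at $f_1^*-f_2^*=-\epsilon g_y^*$ and $\det J_a\approx-\epsilon f_y^*g_x^*>0$. Part (ii) also follows the paper's route: Lemma~\ref{lem:cm}, orientation via Remarks~\ref{rem1}--\ref{rem2}, the weighted blow-up, and \cite[Lemma~7]{kristiansen2023}.

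\textbf{The gap.} The step that fails is your claim that (C3) and (C6) make both eigenvalues of the desingularized problem negative. On $\mathcal S$, $Q$ is odd in $u$. The desingularized $u$-equation is odd in $u$ and the $w$-equation is even, so the linearization at the origin is diagonal. Normalize $f_y^*<0$ and multiply by $-\partial_uQ$, which is the factor that is positive on $\mathcal S_a$. Your ``rescale by $\partial_uQ$'' reverses time on $\mathcal S_a$ when $f_y^*<0$. The two eigenvalues are
\[
\partial_w w'\big|_0=\frac{\Omega g_0^*}{f_y^*},\qquad
\partial_u u'\big|_0=-g_x^*-\frac{\Omega g_0^*}{f_y^*}.
\]
(C6) makes the first negative and (C3) gives $g_x^*>0$. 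The second, however, is negative only if $\Omega g_0^*<|f_y^*|\,g_x^*$. This is a smallness condition on $g_0^*$, meaning the equilibrium must sit close to the cusp, and nothing in (C1)--(C6) enforces it. If it fails, the cusped singularity is of saddle type and there are no SAOs. Compare the FHN values $\lambda_1=-6v_s(v_s-c)$, $\lambda_2=-\lambda_1+2g$, which are both negative only when $0<6v_s(v_s-c)<-2g$. Note also that $g_y^*$, and hence (C2), does not enter this linearization at all.

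The paper does not derive this sign condition either; it is built into the statement through ``where $\lambda_1<\lambda_2<0$ are the eigenvalues''. You should drop the claimed derivation and treat negativity, i.e.\ $|\Omega g_0^*|<|f_y^*g_x^*|$, as a hypothesis. Also note the labelling: for $\lfloor\lambda_2/\lambda_1\rfloor\ge1$, $\lambda_1$ must be the weak eigenvalue $\Omega g_0^*/f_y^*$, so $\lambda_2<\lambda_1<0$. The ordering you copied would always give zero SAOs.

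\textbf{Two smaller points.} First, in your orientation bullet, the attractivity criterion in Remarks~\ref{rem1}--\ref{rem2} depends on the sign of $\Omega/f_y^*$, which (C6) does not fix. The paper closes this by observing that (C1)--(C6) are invariant under $y_i\mapsto -y_i$, so one may assume $f_y^*<0$. Alternatively, you can check directly that the middle root of $\Gamma u^3+\Omega wu+f_y^*z$ has $\partial_u\dot u<0$ whenever $\Gamma>0$. Second, in part (i) your appeal to (C5) is a non sequitur. (C5) says $p^*$ is \emph{not} an equilibrium, so the Hopf computation must be done at a nearby symmetric equilibrium obtained through an unfolding parameter. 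That again requires $g_0^*$ to be small, which the paper also leaves implicit.
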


\begin{proof}
\textbf{Part (i).}
At the cusp, the geometric fold condition $\fo = \ft$ holds
(from \eqref{eq:fold_new}).
Because we assume inhibitory coupling $\ft < 0$, it follows that $\fo < 0$.
The conditions $\fo < 0$ and $\gy \leq 0$ alongside $\fy \gx < 0$  
guarantee
that the symmetric subspace is strongly attracting, since the symmetric block $J_s$ has
$\mathrm{tr}(J_s) = 2\fo + \epsilon\gy < 0$ and
$\det(J_s) \approx \epsilon(2\fo\gy - \fy\gx) > 0$ for 
positive $\epsilon$.
Turning to the antisymmetric block $J_a$, the trace vanishes when
$\fo - \ft = -\epsilon \gy$, placing a bifurcation at an $\mathcal{O}(\epsilon)$
parameter distance from the cusp (in the case $\gy=0$, at the cusp).
At this point,
$\det(J_a) = -(\epsilon \gy)^2 - \epsilon \fy \gx \approx -\epsilon \fy \gx$
for $0 < \epsilon \ll 1$.
The assumption $\fy \gx < 0$ ensures $\det(J_a) > 0$.
Because the trace vanishes while the determinant is strictly positive, the system undergoes
a Hopf bifurcation.
The corresponding eigenvalues are
$\lambda_{a,\pm} \approx \pm i \sqrt{\epsilon |\fy \gx|}$.
As these purely imaginary eigenvalues are of order $\mathcal{O}(\sqrt{\epsilon})$, this is
a singular Hopf bifurcation.

\smallskip
\textbf{Part (ii).}
By Lemma~\ref{lem:cm},  
under conditions (C1)--(C4), the extended system admits a smooth
attracting center manifold $M_a$ on which the dynamics reduces to the three-dimensional
system \eqref{eq:reduced_3d}.

First note that the transformation $y_i \mapsto \tilde y_i := - y_i$ transforms the system to one with $\tilde \fy = -\fy$, $\tilde \Omega = - \Omega$, $\tilde \Gamma=\Gamma$, $\tilde \gx=-\gx$, $\tilde \gy=\gy$, and $\tilde g_0^*=-g_0^*$. 
Hence all conditions (C1)-(C6) are invariant under this transformation, so 
without loss of generality we may assume
$\fy<0$. 
Then, by (C6), 
$\Omega$ and $g_0^*$ have the same sign whereas
$\Gamma$ and $\fy$ have opposite signs. 
By Remarks \ref{rem1} and \ref{rem2}, the cusp on critical manifold $\mathcal{S}$ opens up in the direction of negative 
(respectively positive) $w$ direction if $\Omega>0$ (respectively $\Omega<0$) and the center sheet is attracting. 
If $g_0^*>0$ (respectively $g_0^*<0$) the flow  at the cusp is towards positive (respectively, negative) $w$ values, 
i.e., 
the cusp is crossed  from the attracting sheet  of $\mathcal{S}$ since $\Omega$ and $g_0^*$ have the same sign.

The system \eqref{eq:reduced_3d} is slow-fast with one fast variable $u$ and two slow variables $(w,z)$.
By Step 5  
of the proof of Lemma~\ref{lem:cm}, under conditions (C3) and (C4) the
critical manifold $\mathcal{S}$ of \eqref{eq:reduced_3d} has a cusp singularity at the
origin. 
The cusp structure established in Step 5 of the proof of Lemma \ref{lem:cm}, with $(u,w,z)\sim(r,r^2,r^3)$
near the origin, determines the blowup  
\begin{equation}
(u,w,z,\epsilon) = (r\bar u,\, r^2\bar w,\, r^3\bar z,\, r^4\bar\epsilon)
\label{eq:blowup}
\end{equation}
as in \cite{kristiansen2023},
and the
analysis of \cite{kristiansen2023} applies directly.
Condition (C5) ensures $g_0^* \neq 0$, so that in the scaling chart $\bar\epsilon=1$
of the blowup \eqref{eq:blowup} the desingularized slow equation gives $\hat{\dot w}_2 = g_0^* \neq 0$, 
driving passage through the cusp region and yielding the Weber equation 
\begin{equation}
U_2'' - Y_2\,U_2' + \frac{\lambda_2}{\lambda_1}\,U_2 = 0
\label{eq:weber}
\end{equation}
for the linearization near $\gamma:\{u=z=0\}$.
By \cite[Lemma 7]{kristiansen2023}, whenever $\lambda_2/\lambda_1\notin\mathbb{N}$, the
attracting and repelling slow manifolds $\mathcal{S}_{a,\epsilon}$ and $\mathcal{S}_{r,\epsilon}$
intersect transversally along $\gamma$ with the tangent space of $\mathcal{S}_{a,\epsilon}$
twisting $\lfloor\lambda_2/\lambda_1\rfloor$ times.
Each twist corresponds to a full $180^\circ$ rotation in the $(u,z)$-plane, i.e., one SAO,
giving \eqref{eq:SAO_count}.
\end{proof}

\begin{remark}
The condition (C3)
reflects the typical negative feedback loop between
the fast and slow variables within individual units, representing, for instance, a fast
voltage variable activating a slow recovery variable ($\gx>0$) that in turn represses
the voltage ($\fy<0$).
\end{remark}

\begin{remark}
As is generic for singular Hopf bifurcations, MMOs appear from the SAOs combined with an
appropriate global return mechanism \cite{desroches2012}.
\end{remark}

\begin{remark}
The eigenvalues $\lambda_1,\lambda_2$ in \eqref{eq:SAO_count} are the eigenvalues of the
desingularized reduced flow on $\mathcal{S}_a$ at the cusp, obtained by multiplying
\eqref{eq:reduced_3d} at $\epsilon>0$ by $-\partial_u Q = (\Omega/\fy)w + \cdots$ and
linearizing.
For the FHN model they equal $\lambda_1=-6v_s(v_s-c)$ and $\lambda_2=-\lambda_1+2g$
\cite[eq.~(23)]{kristiansen2023}; for general systems they can be read off from the
analogous desingularized problem on $M_a$.
\end{remark}

\section{Cusped singularities in the Curtu model}
\label{sec:curtu}

The Curtu model \cite{curtu2010} describes two neural populations with mutual inhibition
\begin{align}
\dot{u}_{1} &= -u_{1}+S(I-bu_{2}-a_{1}+u_{1}), \quad
\dot{a}_{1} = \epsilon(-a_{1}+c u_{1}), \nonumber \\
\dot{u}_{2} &= -u_{2}+S(I-bu_{1}-a_{2}+u_{2}), \quad
\dot{a}_{2} = \epsilon(-a_{2}+c u_{2}), \label{eq:curtu}
\end{align}
where $S(x)=1/(1+\exp(-r(x-\theta)))$.
We fix the parameters to $I = 0.68$, $\epsilon = 0.01$, $b = 0.6055$, $c = 0.63$,
$r = 10$, and $\theta = 0.2$.
With these parameters, the system produces mixed-mode oscillations (MMOs), as shown in
Fig.~\ref{fig:curtu_mmo}A,B.

The critical manifold is given by $a_{i} = Y(u_{i},u_{j}) = I-bu_{j}+\phi(u_{i})$,
where $\phi(u) := u-S^{-1}(u)$.
The partial derivatives of $Y$ are given by $Y_1(u_i, u_j) = \phi^{\prime}(u_i) = 1-1/S'(\cdot)$  
and
$Y_2(u_i, u_j) = -b$.
The geometric fold condition for symmetric points \eqref{eq:fold_new} corresponds to
$Y_1(u^*, u^*) = Y_2(u^*, u^*)$, which yields
\begin{equation}
\phi^{\prime}(u^{*}) = 1- \frac{1}{ru^*(1-u^*)} = -b. \label{eq:curtu_fold}
\end{equation}
Evaluating this numerically for our parameter set gives the relevant upper-branch symmetric
fold point at $u^{*} \approx 0.933$.

To verify that this point forms a true cusp, we evaluate the non-degeneracy condition
\eqref{eq:cusp_new} by calculating the mixed partials of $Y$:
\begin{equation}
Y_{11}(u^*, u^*) - Y_{22}(u^*, u^*) = \phi^{\prime\prime}(u^{*}) - 0
= \frac{1-2u^*}{r (u^*)^2 (1-u^*)^2} \approx -22.2 \ne 0.
\end{equation}
Thus, all geometric conditions of Definition~\ref{def:cusp} are satisfied, establishing the
presence of a non-degenerate cusped singularity.
Moreover, 
since $\fy = -bS'(\cdot)<0$, $\gx = c>0$, and $\gy=-1$, the  
conditions  (C2) and (C3) of Theorem~\ref{thm:main} are  
satisfied.
Further, direct calculations give 
\[
\Gamma  
= \frac16 S'''(\cdot) (1+b)^3 - \frac1{4\fo} S''(\cdot)^2(1-b^2)(1+b)^2>0
\] since $S'''(\cdot)>0$ and $\fo=\ft <0$, and condition (C4) is also fulfilled.

Further, 
\begin{equation}
\Omega = S''(\cdot) [ -(1+b) - (1-b^2)\,\frac{\fy}{2\fo} ] >0 
\end{equation}
since $\fy<0$, $\fo<0$, and $S''(\cdot)<0$ which follows from the observation that $S^{-1}(u^*)>\theta$, i.e., the argument of $S$ lies to the right of its inflection point.

Thus, $\Omega/\fy < 0$ and $\Gamma/\Omega > 0$ as in
Remark~\ref{rem1}: the cusp opens in the $w<0$
direction and the central sheet of $\mathcal{S}$ is attracting ($\mathcal{S}_a$ at $w < 0$).
The slow dynamics evaluated at the cusped singularity gives
$g_0^* = g(u^*, a^*) \approx 0.0036 > 0$.
Hence (C5) is satisfied: the cusped singularity is not a full-system equilibrium.
Furthermore, $g_0^* \cdot \Omega > 0$ and
$\Gamma > 0$, so condition~(C6) is satisfied.
The positive $g_0^*$ drives $w$ upward ($\dot{w} \approx \epsilon g_0^* > 0$),
so trajectories approach the cusp from the attracting sheet $\mathcal{S}_a$
(located at $w < 0$), consistent with (C6) and Remark~\ref{rem1}.

The full-system equilibrium 
is a saddle-focus located at
$u_{eq} \approx 0.931$, which 
lies near but above the cusped singularity in the $a$
direction, on the repelling sheet $\mathcal{S}_r$ (since $w_{eq} = a_{\rm eq}-a^* \approx 0.0023 > 0$).
The proximity of the equilibrium to the fold allows the system to pass near the cusped singularity and enters the SAO regime before
escaping into LAOs, as described by Theorem~\ref{thm:main}(ii).
The geometry of the critical manifold and the corresponding cusp singularity are depicted
in Fig.~\ref{fig:curtu_mmo}C.

\begin{figure}[htbp]
    \centering
    \includegraphics[width=0.9\textwidth]{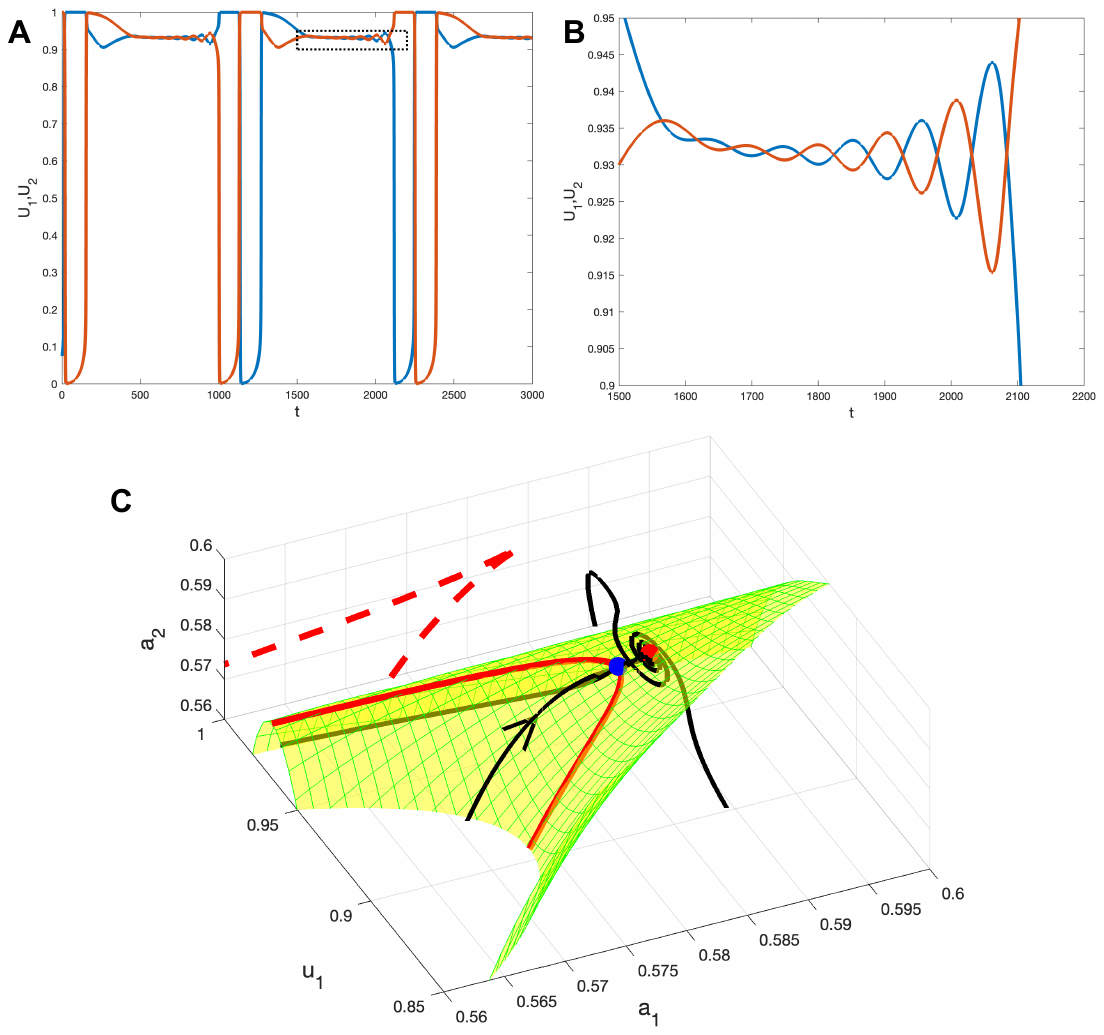}
    \caption{Mixed-mode oscillations and the cusped singularity in the Curtu model.
    (A) Time series showing MMOs for the chosen parameter set. Notice how cell 1 (red) and
    cell 2 (blue) alternate.
    (B) A zoomed-in view of the small amplitude oscillations (SAOs).
    (C) The critical manifold (yellow) geometry with the fold curve (red) revealing the
    cusp singularity (best seen in the projection of the fold onto the $(a_1,a_2)$ plane
    as the red, dashed curve) that organizes the dynamics. 
    The attracting sheet $\mathcal{S}_a$ is the part of the critical manifold to the left of the fold curve. 
    The trajectory corresponding
    to the simulation in panel (A) is shown in black. As it passes the cusp, SAOs are
    generated, and as the trajectory approaches the full-system saddle-focus where it
    spirals away, SAOs with increasing amplitude appear, eventually giving rise to the
    large-amplitude oscillations. Notice how the trajectory approaches the cusp region
    alternatingly from the left and from the right, explaining the alternating pattern
    seen in panel (A).}
    \label{fig:curtu_mmo}
\end{figure}

\section{Cusped singularities in coupled Morris-Lecar neurons}
\label{sec:ml}

Two Morris-Lecar neurons with fast synaptic inhibition \cite{morris1981} are described by
\begin{align}
C \dot{V}_{i} &= I_{app}-g_{Ca}m_{\infty}(V_{i})(V_{i}-V_{Ca})-g_{K}n_{i}(V_{i}-V_{K})
\nonumber \\
&\quad -g_{L}(V_{i}-V_{L})-g_{s}s_{\infty}(V_{j})(V_{i}-V_{syn}), \nonumber \\
\dot{n}_{i} &= \phi_{n} (n_{\infty}(V_{i})-n_{i})/\tau(V_{i}),
\quad i=1,2, \quad j\ne i. \label{eq:ml}
\end{align}
The activation functions and the time-scale function are given by
\begin{align}
m_{\infty}(V) &= 0.5 \left( 1+\tanh\left(\tfrac{V-v_1}{v_2}\right) \right), \nonumber \\
n_{\infty}(V) &= 0.5 \left( 1+\tanh\left(\tfrac{V-v_3}{v_4}\right) \right), \nonumber \\
\tau(V) &= 1/\cosh\left(\tfrac{V-v_3}{2v_4}\right),
\end{align}
and synaptic inhibition is modeled as
\begin{equation}
s_\infty(V) = \frac{1}{1+\exp(-(V-\theta_s)/k_s)}.
\end{equation}
With the parameters provided in Table~\ref{tab:ml_params}, the system produces MMOs
(Fig.~\ref{fig:ml_mmo}A,B).

\begin{table}[b]
\centering
\begin{tabular}{ccccccccc}
\hline
$C$ & $V_K$ & $g_K$ & $V_{Ca}$ & $g_{Ca}$ & $V_L$ & $g_L$ & $I_{app}$ & $V_{syn}$ \\
$(\mu\text{F/cm}^2)$ & (mV) & $(\text{mS/cm}^2)$ & (mV) & $(\text{mS/cm}^2)$ & (mV)
& $(\text{mS/cm}^2)$ & $(\mu\text{A/cm}^2)$ & (mV) \\
20 & -84 & 8 & 120 & 4.4 & -60 & 2 & 80 & -70 \\ \hline
$g_s$ & $\phi_n$ & $v_1$ & $v_2$ & $v_3$ & $v_4$ & $k_s$ & $\theta_s$ & \\
$(\text{mS/cm}^2)$ & $(\text{ms}^{-1})$ & (mV) & (mV) & (mV) & (mV) & (mV) & (mV) & \\
0.3 & 0.01 & -1.2 & 18 & 2 & 30 & 2 & -25 & \\ \hline
\end{tabular}
\caption{Parameters for the coupled Morris-Lecar neurons.}
\label{tab:ml_params}
\end{table}

Solving for the slow variable $n_{i}$ defines the critical manifold map
\begin{equation}
n_i = Y(V_{i},V_{j}) = \frac{f_{0}(V_{i})-g_{s}s_{\infty}(V_{j})(V_{i}-V_{syn})}{g_{K}(V_{i}-V_{K})},
\end{equation}
where $f_{0}(V)=I_{app}-g_{Ca}m_{\infty}(V)(V-V_{Ca})-g_{L}(V-V_{L})$.

The geometric fold condition \eqref{eq:fold_new} is given by setting
$Y_1(V^*, V^*) = Y_2(V^*, V^*)$.
Solving this equation numerically locates the symmetric fold point at $V^* \approx -30.36$ mV.
It is analytically more straightforward to verify the non-degeneracy condition using the
equivalent formulation \eqref{eq:cusp_f} instead of computing the second derivatives of
the quotient $Y$.
For the fast dynamics 
$f(V_i,V_j,n_i)=\bigl(f_0(V_i)-g_K n_i(V_i-V_K)-g_s s_\infty(V_j)(V_i-V_{\mathrm{syn}})\bigr)/C$,
the required partial derivatives evaluated at the symmetric fold $V^*$ are:
\begin{equation}\label{eq:ML_derivatives}
\begin{aligned}
f_1^* &= \bigl(f_0'(V^*)-g_K n^*-g_s s_\infty(V^*)\bigr)/C, & 
f_2^* &= -g_s s_\infty'(V^*)(V^*-V_{\mathrm{syn}})/C, \\
f_{11}^* &= f_0''(V^*)/C, & 
f_{12}^* &= -g_s s_\infty'(V^*)/C, \\
f_{22}^* &= -g_s s_\infty''(V^*)(V^*-V_{\mathrm{syn}})/C, & 
f_{111}^* &= f_0'''(V^*)/C, \\
f_{112}^* &= 0, & 
f_{122}^* &= -g_s s_\infty''(V^*)/C, \\
f_{222}^* &= -g_s s_\infty'''(V^*)(V^*-V_{\mathrm{syn}})/C, &
f_y^* &= -g_K(V^*-V_K)/C, \\ 
f_{1y}^* &= -g_K/C,& \quad f_{2y}^* &= 0.
\end{aligned}
\end{equation}
Substituting $f_{11}^*$, $f_{22}^*$, $f_{1y}^*$ and $f_{2y}^*$ into (7), the explicit geometric 
non-degeneracy constraint evaluates as
\begin{equation}\label{eq:ML_Dstar}
D^* = \frac{f_0''(V^*)}{C} + \frac{g_s s_\infty''(V^*)(V^*-V_{\mathrm{syn}})}{C} 
- \frac{2f_1^*}{V^*-V_K} \approx 0.02 > 0.
\end{equation}
Thus, we have a non-degenerate cusped singularity.

Clearly conditions (C1)--(C2) of Theorem~\ref{thm:main} are fulfilled. 
Since $g_x^*=\phi_n/\tau(V^*)>0$ and $f_y^*<0$, we have $f_y^* g_x^*<0$, so condition (C3) 
is satisfied. Moreover, substituting the derivatives \eqref{eq:ML_derivatives} into the 
center-manifold coefficient (15) yields
\begin{multline}\label{eq:ML_A_corrected}
\Gamma = \frac{1}{6C}\Bigl[f_0'''(V^*)-3g_s s_\infty''(V^*)
+g_s s_\infty'''(V^*)(V^*-V_{\mathrm{syn}})\Bigr] \\
- \frac{\bigl[f_0''(V^*)+g_s s_\infty''(V^*)(V^*-V_{\mathrm{syn}})\bigr]
\bigl[f_0''(V^*)+2g_s s_\infty'(V^*)-g_s s_\infty''(V^*)(V^*-V_{\mathrm{syn}})\bigr]}
{4C^2 f_1^*}.
\end{multline}
A direct numerical evaluation gives $\Gamma\approx 0.002>0$, confirming condition~(C4).

At the fold $\fo=\ft <0$ and since also $\fy<0$, $\Omega$ and $D^*$ have opposite signs, i.e., $\Omega<0$. 
Hence, $\Omega/\fy>0$ and $\Gamma/\Omega < 0$, and the cusp opens in
the $w > 0$ direction with the central sheet being attracting, see Remark~\ref{rem2}, in contrast to the Curtu model.

At the cusped singularity, $n^* = Y(V^*, V^*) \approx 0.1046$ while
$n_\infty(V^*) \approx 0.1036$, so $n^* > n_\infty(V^*)$ and $g_0^*<0$.
Hence (C5) is satisfied.
Furthermore, $g_0^* \, \Omega  > 0$ and $\Gamma > 0$, so condition~(C6) is satisfied.
The negative $g_0^*$ drives $w$ downward ($\dot{w} \approx \epsilon g_0^* < 0$),
so trajectories approach the cusp from the attracting sheet $\mathcal{S}_a$, consistent with (C6) and Remark~\ref{rem2}.
The full-system equilibrium (a saddle-focus), found numerically at $V_{\rm eq} \approx -30.24$~mV with
$n_{\rm eq} = n_\infty(V_{\rm eq}) \approx 0.1044$, lies at
$n_{\rm eq} - n^* \approx -0.0002 < 0$, i.e., below the cusp in the $n$-direction,
on the repelling sheet $\mathcal{S}_r$.

All conditions of Theorem~\ref{thm:main} are satisfied, and the system enters the
characteristic SAO regime as described by Theorem~\ref{thm:main}(ii).
The geometry of the critical manifold with the cusp singularity and the corresponding
full-system saddle-focus are shown in Fig.~\ref{fig:ml_mmo}C.

\begin{figure}[tbp]
    \centering
    \includegraphics[width=0.9\textwidth]{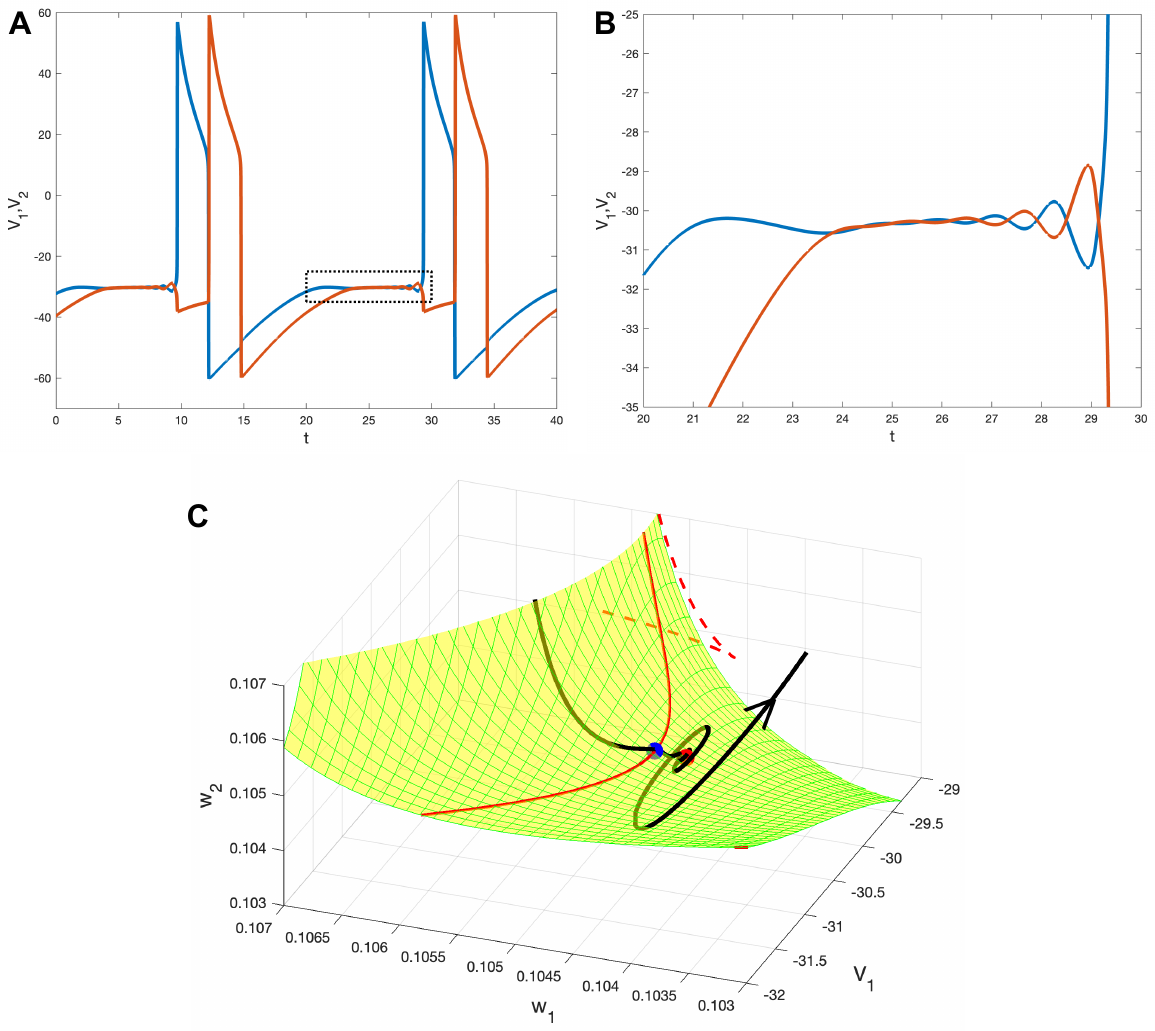}
    \caption{Mixed-mode oscillations and the cusped singularity in the coupled
    Morris-Lecar model. (A) Time series showing MMOs for the chosen parameter set. (B) A
    zoomed-in view of the MMOs showing the alternating small and large amplitude
    oscillations. (C) The critical manifold geometry revealing the cusp singularity that
    organizes the dynamics. 
    The attracting sheet $\mathcal{S}_a$ is the part of the critical manifold to the left of the fold curve. 
    Colors and symbols as in Fig.~\ref{fig:curtu_mmo}C}
    \label{fig:ml_mmo}
\end{figure}

\section{Discussion}
\label{sec:discussion}

We have established cusped singularities as a universal mechanism for mixed-mode
oscillations in mutually inhibitory slow-fast systems.
Our main results, Lemma~\ref{lem:cm} and Theorem~\ref{thm:main}, together provide a
complete account: Lemma~\ref{lem:cm} carries out the center manifold reduction and
identifies the precise conditions on $f$ and $g$ under which the blow-up analysis of
\cite{kristiansen2023} transfers from the FHN model to the general class \eqref{eq:full},
while Theorem~\ref{thm:main} combines this with the singular Hopf analysis to give both
the qualitative (SAOs occur) and quantitative ($n_{\mathrm{SAO}}=\lfloor\lambda_2/\lambda_1\rfloor$)
description of the dynamics.
In most models, the non-degeneracy conditions $D^*\neq 0$ and $\Gamma\neq 0$ will be fulfilled, which we confirmed explicitly for  the Curtu and Morris-Lecar models.

The result that the existence of a cusped singularity in the critical manifold geometry yields a
singular Hopf bifurcations in the full dynamics provides a powerful diagnostic tool: the
existence of MMOs can be predicted by analyzing the shape of the critical manifold in
combination with its proximity to the full system equilibrium.
Specifically, as shown  
in both the Curtu and Morris-Lecar systems,
actively tuning system parameters to push the full-system symmetric equilibrium near the cusped
singularity  induces SAOs related to the nearby singular Hopf bifurcation.
MMOs are created when the SAOs are combined with an appropriate return mechanism that reinjects the system into the cusp region. Due to the symmetry in the system, and depending on the parameters, the reinjection can occur from either side of the cusp leading to MMO patterns where the two single units alternate \cite{kristiansen2023}, as confirmed here for the Curtu model (Fig.~\ref{fig:curtu_mmo}). 
This behavior is in contrast to standard folded-node controlled MMOs where the entry into the node funnel always happens from the same side \cite{desroches2012}.

The presented  framework  applies broadly and beyond neuroscience to symmetric systems featuring inhibitory coupling, such as chemical oscillators coupled via a common inhibitor \cite{vanag2010}.
Future work should explore higher-order systems and asymmetric perturbations of \eqref{eq:full}.
Experimental investigation of MMOs in inhibitory neural cultures could test whether the
distinctive signatures predicted here are observed in biological systems.

\vspace{0.5cm}
\noindent \textbf{Declaration of generative AI} \\
During the preparation of this work, the author used AI-assisted tools (Gemini 3.1 Pro,
Kimi K2.6 Thinking, Claude Sonnet 4.6) for performing and checking 
calculations, as well as for drafting and correcting text. After using these tools, the
author reviewed and edited the content as needed and takes full responsibility for the
content of the published article.


\begin{thebibliography}{99}
    \bibitem{desroches2012} M. Desroches, J. Guckenheimer, B. Krauskopf, C. Kuehn,
    H. M. Osinga, M. Wechselberger, Mixed-mode oscillations with multiple time scales,
    SIAM Rev. 54 (2012) 211--288. DOI: 10.1137/100791233.
    \bibitem{wechselberger2005} M. Wechselberger, Existence and bifurcation of canards
    in $\mathbb{R}^3$ in the case of a folded node, SIAM J. Appl. Dyn. Syst. 4 (2005)
    101--139. DOI: 10.1137/030601995.
    \bibitem{krupa2010} M. Krupa, M. Wechselberger, Local analysis near a folded
    saddle-node singularity, J. Differ. Equ.  248 (2010) 2841--2888.
    DOI: 10.1016/j.jde.2010.02.006.
    \bibitem{guckenheimer2008} J. Guckenheimer, Singular Hopf bifurcation in systems
    with two slow variables, SIAM J. Appl. Dyn. Syst. 7 (2008) 1355--1377.
    DOI: 10.1137/080718528.
    \bibitem{battaglin2021} S. Battaglin, M. G. Pedersen, Geometric analysis of mixed-mode oscillations 
    in a model of electrical activity in human beta-cells, Nonlinear Dyn. 104 (2021) 4445--4457.
    DOI: 10.1007/s11071-021-06514-z.
    \bibitem{kristiansen2023} K.U. Kristiansen, M.G. Pedersen, Mixed-mode oscillations
    in coupled FitzHugh-Nagumo oscillators: Blow-up analysis of cusped singularities,
    SIAM J. Appl. Dyn. Syst. 22 (2023) 1383--1422. DOI: 10.1137/22M1480495.
    \bibitem{pedersen2022} M. G. Pedersen, M. Br{\o}ns, M. P. S{\o}rensen,
    Amplitude-modulated spiking as a novel route to bursting: Coupling-induced mixed-mode
    oscillations by symmetry breaking, Chaos 32 (2022) 013121.
    DOI: 10.1063/5.0072497.
    \bibitem{curtu2010} R. Curtu, Singular Hopf bifurcations and mixed-mode oscillations
    in a two-cell inhibitory neural network, Phys. D 239 (2010) 504--514.
    DOI: 10.1016/j.physd.2009.12.010.
    \bibitem{morris1981} C. Morris, H. Lecar, Voltage oscillations in the barnacle giant
    muscle fiber, Biophys. J. 35 (1981) 193--213.
    DOI: 10.1016/S0006-3495(81)84782-0.
    \bibitem{carr1981} J. Carr, Applications of Centre Manifold Theory, Appl. Math. Sci.
    35, Springer-Verlag, New York, 1981.
    \bibitem{vanag2010} V.K. Vanag, I.R. Epstein,  Periodic perturbation of one of two identical 
    chemical oscillators coupled via inhibition, Phys. Rev. E 81(2010) 066213.
    DOI: 10.1103/PhysRevE.81.066213
\end{thebibliography}
\end{document}